\newcommand{\id}[1]{{\text{id}}_{#1}}
\newcommand{\fraka}{{\mathfrak A}}
\newcommand{\Z}{{\mathbbm Z}}
\newcommand{\one}{{\mathbbm 1}}
\renewcommand{\id}[1]{{\rm id}_{#1}}
\newcommand{\trace}[1]{{\rm tr}(#1)}
\newcommand{\rend}[2]{{\rm End}_{#1}(#2)}
\newtheoremstyle{statement}
{13pt}
{13pt}
{\it}
{}
{\bf}
{.$-$}
{.5em}
{}
\theoremstyle{statement}
\newtheorem{theorem}{Theorem}[section]
\newtheorem{proposition}[theorem]{Proposition}
\newtheorem{cor}[theorem]{Corollary}
\newtheoremstyle{definition}
{13pt}
{13pt}
{}
{}
{\scshape}
{.}
{.5em}
{}
\theoremstyle{definition}
\def\@seccntformat#1{\protect\makebox[0pt][r]{\@ifundefined{#1@cntformat}%
   {\csname the#1\endcsname.\quad}%
   {\csname #1@cntformat\endcsname}%
}}
\def\section@cntformat{\S\thesection.\ }
\def\subsection@cntformat{\S\thesubsection \ }
\begin{document}
\title{A remark on the additivity of traces in triangulated categories}


\author{Shahram Biglari}
\address{Fakult\"at f\"ur Mathematik, Universit\"at Bielefeld, D-33615, Bielefeld, Germany}
\curraddr{} \email{biglari@mathematik.uni-bielefeld.de}
\thanks{}


\subjclass[2010]{Primary 18E30 - Secondary 20C99}

\keywords{additivity of trace, tensor triangulated category}

\date{2010}
\maketitle
\section{Introduction and statements}
In what follows a tensor category is understood to be an \emph{ACU} $\otimes$-category in the sense of Saavedra Rivano~\cite[Ch. I, 2.4.1]{Saavedra1972}. We denote the unit object by $\one$, the commutativity constraint by $\psi$, and the tensor structure by $\otimes$. There is also an associativity constraint that we omit and all these constraints are subject to natural compatibility conditions (\emph{loc. cit.} I, 2.4). Recall (Deligne~\cite[2.1.2]{Deligne1990}) that an object $X$ of a tensor category is said to be dualizable if there is an object $X^\vee$ and morphisms $\delta_X\colon \one\to X\otimes X^\vee$ and ${\rm ev}_X\colon X^\vee\otimes X\to \one$ such that the diagrams
\[
\xymatrix @!C=7pc{
{\mathbbm 1}\otimes X\ar[r]^-{\delta_X\otimes \id{X}}\ar[dr]_-{\psi_{{\mathbbm 1}, X}} & X\otimes X^{\vee}\otimes X \ar[d]^-{\id{X}\otimes {\rm ev}_X}& X^{\vee}\otimes X\otimes X^{\vee} \ar[d]_-{{\rm ev}_X\otimes\id{X^{\vee}}} & X^{\vee}\otimes{\mathbbm 1}\ar[l]_-{\id{X^{\vee}}\otimes \delta_{X}}\ar[dl]^-{{\psi_{X^{\vee},{\mathbbm 1}}}} &   \\ & X\otimes{\mathbbm 1} & {\mathbbm 1}\otimes X^{\vee}
}
\]
are commutative. For example for the tensor category of modules over a commutative ring, dualizability is (e.g. \emph{loc. cit.} 2.6) the same as being finitely generated and projective. With an appropriate interpretation, the morphism ${\rm ev}_X$ gives the trace. More concretely, let $X$ be a dualizable object and $f\colon X\to X$ an endomorphism. The trace of $f$, here denoted by $\trace{f;X}$, is defined to be the composite
\[
\one \xrightarrow{\delta_X} X\otimes X^\vee\xrightarrow{f\otimes \id{X^\vee}} X\otimes X^\vee\xrightarrow{\psi_{X, X^\vee}} X^\vee\otimes X\xrightarrow{{\rm ev}_X} \one.
\]
This is an element of $\rend{}{\one}$. The resulting map ${\rm tr}\colon \rend{}{X}\to \rend{}{\one}$ is linear. Moreover, when defined, the trace $\trace{f\otimes g;X\otimes Y}$ is the product of $\trace{f;X}$ and $\trace{g;Y}$. For the proofs of these and other properties see any of the references cited above.

We clarify some terminologies. A tensor category as above is (Mac Lane~\cite{Mac1963}) also called an (additive) symmetric monoidal category. A symmetric monoidal category in which each functor $Z\mapsto Z\otimes X$ has a right adjoint is ({Eilenberg-Kelly~\cite{MR0225841}) said to be closed. Recall the following result.

\begin{theorem}[May {\cite[0.1]{May2001}}]\label{thm:may} For any distinguished triangle $\Delta:X\to Z\to Y\to X[1]$ of dualizable objects in a closed symmetric monoidal category with a compatible triangulation we have
\[
\trace{\id{};Z}=\trace{\id{};X}+\trace{\id{};Y}.
\]
\end{theorem}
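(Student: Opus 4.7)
The plan is to exploit the compatibility of the tensor structure with the triangulation in order to split $\trace{\id{};Z}$ into contributions coming from $X$ and from $Y$, by analyzing a $3\times 3$ grid of distinguished triangles built from $\Delta$ and a dual triangle. Since closedness is assumed and each of $X,Y,Z$ is dualizable, one first verifies that $(-)^\vee$ transforms $\Delta$ into a distinguished triangle of the shape $Y^\vee\to Z^\vee\to X^\vee\to Y^\vee[1]$. Tensoring $\Delta$ with this triangle and using that $-\otimes-$ is exact in each variable then produces a $3\times 3$ array whose rows and columns are distinguished triangles, and whose diagonal entries $X\otimes X^\vee$ and $Y\otimes Y^\vee$ carry their own evaluations into $\one$.

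Next I would analyze $\delta_Z\colon \one\to Z\otimes Z^\vee$ and ${\rm ev}_Z$ through this diagram. The coevaluation $\delta_Z$ should admit, up to the triangles appearing in the grid, a decomposition into pieces landing in $X\otimes X^\vee$ and in $Y\otimes Y^\vee$; composed with the corresponding evaluations these recover $\trace{\id{};X}$ and $\trace{\id{};Y}$ respectively. Additivity then reduces to showing that the ``off-diagonal'' contribution, arising from the connecting morphism $Y\to X[1]$ of $\Delta$ and its dual counterpart $X^\vee\to Y^\vee[1]$, vanishes in $\rend{}{\one}$.

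The main obstacle is precisely this off-diagonal vanishing. The would-be cross term is a composite $\one\to\cdots\to\one$ that factors through a suspended object such as $(X\otimes Y^\vee)[1]$, and its nullity is not formal in an arbitrary triangulated tensor category. The argument must invoke the coherence between the symmetry constraint $\psi$, the suspension, and the distinguished triangles that is built into the notion of a compatible triangulation. Concretely, I expect the key computation to compare the two natural ways of closing the loop $\one\to (X\otimes Y^\vee)[1]\to\one$: one via the connecting map of $\Delta\otimes \id{X^\vee}$, the other via the connecting map of $\id{X}\otimes \Delta^\vee$. The compatibility axioms should force these to differ by the sign introduced by commuting the suspension past $\otimes$, so that their sum vanishes, yielding the claimed identity.
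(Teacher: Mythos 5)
The paper does not actually prove this statement: it is quoted from May \cite[0.1]{May2001} and used as a black box, so the only meaningful comparison is with May's original argument. Your sketch does resemble its overall architecture (dualize $\Delta$, tensor the two triangles, and track $\delta_Z$ and ${\rm ev}_Z$ through the resulting grid), so the strategy is not wrong in spirit.

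As a proof, however, the proposal has a genuine gap exactly at the point you flag yourself. The assertion that $\delta_Z$ admits, ``up to the triangles appearing in the grid'', a decomposition into pieces landing in $X\otimes X^\vee$ and $Y\otimes Y^\vee$, with the remaining off-diagonal term killed by a sign cancellation between two connecting maps, is a restatement of the theorem rather than an argument. Two concrete problems. First, tensoring $\Delta$ with its dual and invoking exactness in each variable only yields that each row and each column of the $3\times 3$ array is distinguished; it does not supply the coherent filling (compatible cofiber squares, the braid of triangles, a common total object) needed to compare $\delta_Z$ with $\delta_X$ and $\delta_Y$. That coherence is precisely the content of May's compatibility axioms (TC3)--(TC5); additivity genuinely fails in closed symmetric monoidal triangulated categories lacking them, so a proof must use these axioms at an identified step rather than citing ``compatibility'' at the critical moment. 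Second, a composite $\one\to (X\otimes Y^\vee)[1]\to\one$ has no reason to vanish for degree reasons (in the stable homotopy category $\rend{}{\one}$ receives nonzero maps from shifted objects), so the cross term can only die by the cancellation you conjecture --- and establishing that cancellation is the entire technical content of May's Sections 3--4, for which nothing in the sketch substitutes. A useful sanity check that the missing step is where everything happens: the analogous additivity for a general endomorphism $(f_X,f_Z,f_Y)$ of $\Delta$ is false in general (this failure is the reason the present paper exists), so a correct proof must make visible where $f=\id{}$ enters; your outline does not isolate that.
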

In what follows we let $D$ be a ${k}$-linear Karoubian (i.e. pseudo-abelian) rigid tensor triangulated category where $k=\bar{k}$ is an algebraically closed field of characteristic zero. Note that linearity means (\cite[Ch. I, 0.1.2]{Saavedra1972}) that $\rend{}{\one}$ is a $k$-algebra. Here the term \emph{rigid tensor triangulated} means a closed symmetric monoidal category with a compatible triangulation in the sense of~\cite{May2001} and in which every object is dualizable.  

An endomorphism $f=(f_X,f_Z,f_Y)$ of a distinguished triangle $\Delta$ in $D$ is a commutative diagram
\begin{equation}\label{def:end-triangle} 
    \begin{xy}*!C\xybox{%
      \xymatrix{%
	X\ar[r]\ar[d]^-{f_X} & Z\ar[r]\ar[d]^-{f_Z} & Y\ar[r]\ar[d]^-{f_Y} & X[1]\ar[d]^-{f_X[1]}\\
	X\ar[r] & Z\ar[r] & Y\ar[r] & X[1]
      }} 
    \end{xy}
\end{equation} 
with both rows being the given triangle $\Delta$. For example $\id{}=(\id{X},\id{Z},\id{Y})$ is an endomorphism of $\Delta$. The compositions of endomorphisms of triangles are defined in an obvious manner and is associative. We prove the following result.
\begin{proposition}\label{thm:add-torsion}
Let $f$ be an endomorphism of a distinguished triangle $X\to Z\to Y\to X[1]$ in $D$ with $f^n=\id{}$ for an integer $n> 0$. Then
\[
\trace{f_Z;Z}=\trace{f_X;X}+\trace{f_Y;Y}.
\]
\end{proposition}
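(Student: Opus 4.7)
The plan is to exploit the hypotheses $f^n=\id{}$, $k$ algebraically closed of characteristic zero, and $D$ Karoubian to split the triangle into eigenpieces for $f$ on which we may invoke Theorem~\ref{thm:may}. Because $T^n-1$ factors as $\prod_{\lambda\in\mu_n(k)}(T-\lambda)$ with distinct linear factors, the cyclic group $\langle f\rangle$ acts semisimply on everything in sight. Concretely, for each $n$-th root of unity $\lambda\in k$, set
\[
e_\lambda=\frac{1}{n}\sum_{i=0}^{n-1}\lambda^{-i}f^i.
\]
Each $e_\lambda$ is an endomorphism of the triangle $\Delta$; the family $\{e_\lambda\}$ consists of mutually orthogonal idempotents summing to $\id{\Delta}$, and $f\circ e_\lambda=\lambda\cdot e_\lambda$.

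Since $D$ is Karoubian, each of the components $e_{\lambda,X}, e_{\lambda,Z}, e_{\lambda,Y}$ splits, yielding compatible direct sum decompositions $X=\bigoplus_\lambda X_\lambda$, $Z=\bigoplus_\lambda Z_\lambda$, $Y=\bigoplus_\lambda Y_\lambda$ under which the three maps of $\Delta$ become block-diagonal. This produces a candidate triangle
\[
\Delta_\lambda\colon X_\lambda\to Z_\lambda\to Y_\lambda\to X_\lambda[1]
\]
for each $\lambda$, with $\Delta=\bigoplus_\lambda\Delta_\lambda$. The crux of the argument---and the step I expect to be the main obstacle---is to verify that each $\Delta_\lambda$ is itself distinguished. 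This appeals to the known fact that, in a Karoubian triangulated category, a direct summand of a distinguished triangle cut out by compatible idempotents is again distinguished, a consequence of the idempotent-splitting results of Balmer and Schlichting. In the present situation one may alternatively complete $X_\lambda\to Z_\lambda$ to a distinguished triangle in $D$ and use uniqueness of cones (after summing over $\lambda$) to identify it with $\Delta_\lambda$.

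Granting this, the remainder is routine. Theorem~\ref{thm:may} applied to each $\Delta_\lambda$---whose vertices are dualizable as direct summands of dualizable objects---gives
\[
\trace{\id{Z_\lambda};Z_\lambda}=\trace{\id{X_\lambda};X_\lambda}+\trace{\id{Y_\lambda};Y_\lambda}.
\]
Since $f$ acts as multiplication by $\lambda$ on each of $X_\lambda, Z_\lambda, Y_\lambda$, additivity of the trace on finite direct sums together with $k$-linearity yield $\trace{f_X;X}=\sum_\lambda\lambda\cdot\trace{\id{X_\lambda};X_\lambda}$ and similarly for $Z$ and $Y$. Multiplying the per-$\lambda$ identity by $\lambda$ and summing over $\lambda\in\mu_n(k)$ produces the claimed equality $\trace{f_Z;Z}=\trace{f_X;X}+\trace{f_Y;Y}$.
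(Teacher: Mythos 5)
Your argument is correct, and at bottom it is the paper's own argument specialized to the cyclic group, though executed by a different and more elementary route. The paper first proves a more general statement (Theorem~\ref{thm:add-group}): for any torsion group $G$ acting on the triangle, $\chi_Z=\chi_X+\chi_Y$, obtained by decomposing each vertex into isotypic pieces $X\simeq\coprod_V V\otimes_k S_V(X)$ over the irreducible $kG$-modules, where the multiplicity objects $S_V(X)=\underline{{\rm Hom}}_{kG}(V,X)$ are constructed by a representability argument combined with Karoubianness; Proposition~\ref{thm:add-torsion} is then the case $G=\Z/n\Z$ acting through $f$. Your projectors $e_\lambda=\tfrac{1}{n}\sum_i\lambda^{-i}f^i$ are precisely the central idempotents of $k[\Z/n\Z]$, so your eigenobjects $X_\lambda$ coincide with the $S_V(X)$ (each irreducible $V$ being one-dimensional, $V\otimes_k S_V(X)\simeq S_V(X)$, and $\chi_V$ at the generator equals $\lambda$), and your final trace identity is exactly the paper's formula~\eqref{formula}. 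What your version buys is concreteness: the explicit idempotents make the decomposition immediate and bypass the representability machinery; what it gives up is the general equivariant statement for arbitrary torsion groups. Both proofs rest on the same two external inputs, which you correctly isolate: May's Theorem~\ref{thm:may} applied to each summand, and the fact that a candidate triangle which is a direct summand of a distinguished triangle is distinguished --- the paper asserts this with no more justification than you give, so your citation of Balmer--Schlichting is, if anything, the more careful treatment. One caveat: your fallback sketch (complete $X_\lambda\to Z_\lambda$ to a distinguished triangle and identify it with $\Delta_\lambda$ by uniqueness of cones) is not by itself a proof, since producing a comparison morphism into $\Delta_\lambda$ via the filling axiom already presupposes that $\Delta_\lambda$ is distinguished; you should lean on the cited summand lemma rather than on that alternative.
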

\section{Proof}
Let $D$ and $k$ be as above. We prove a more general result than~\ref{thm:add-torsion}. Let $G$ be a group. A $G$-object in $D$ is a pair $(X, \rho)$ consisting of an object $X$ of $D$ and a $k$-algebra homomorphism $\rho:kG\to {\rm End}_\fraka(X)$ where $kG$ is the group algebra of $G$. We may denote $\rho(a)$ by $a_X$ or simply $a$. Let $Y$ be another $G$-object. An $G$-morphism or $G$-equivariant morphism from $X$ to $Y$ is a morphisms $f\colon X\to Y$ with $a_Yf=fa_X$ for all $a\in kG$. If $X$ is an $G$-object define the central function $$\chi_X\colon G\to {\rm End}_D({\mathbbm 1}),\quad g\mapsto \trace{g;X}.$$

We say that the distinguished triangle $\Delta$ is $G-$equivariant, if $X$, $Y$, and $Z$ are equipped with actions $\rho_X\colon G\to {\rm Aut}_D(Z)$ (similarly for $X$ and $Y$) and such that all morphisms (including the differential) are $G-$equivariant.

\begin{theorem}\label{thm:add-group}
If $G$ is torsion and $X\to Z\to Y\to X[1]$ is $G-$equivariant, then as functions $G\to {\rm End}_D({\mathbbm 1})$ we have
\[
\chi_Z=\chi_X+\chi_Y.
\]
\end{theorem}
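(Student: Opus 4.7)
The plan is to reduce the statement, for each fixed $g \in G$, to an eigenspace-wise application of May's Theorem~\ref{thm:may}. Since the equation $\chi_Z = \chi_X + \chi_Y$ is pointwise, fix $g \in G$. Because $G$ is torsion, $g$ has some finite order $n$, so it suffices to work with the cyclic subgroup $\langle g\rangle$ and the group algebra $k\langle g\rangle \cong k[t]/(t^n-1)$. As $k=\bar k$ has characteristic zero, $t^n-1$ factors into distinct linear factors $\prod_{i=0}^{n-1}(t-\zeta^i)$ with $\zeta$ a primitive $n$-th root of unity, so $k\langle g\rangle$ is semisimple with orthogonal minimal idempotents $e_i = \frac{1}{n}\sum_{j=0}^{n-1}\zeta^{-ij}g^j$ satisfying $g\cdot e_i = \zeta^i e_i$ and $\sum_i e_i = 1$.

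Applying the action homomorphisms $\rho_X,\rho_Z,\rho_Y$ produces orthogonal idempotent decompositions of $\id{X},\id{Z},\id{Y}$. Since $D$ is pseudo-abelian these split the objects as $X=\bigoplus_{i=0}^{n-1} X_i$, with $g$ acting on the summand $X_i$ as the scalar $\zeta^i$, and similarly $Z=\bigoplus_i Z_i$, $Y=\bigoplus_i Y_i$. The $G$-equivariance of every morphism in the triangle $\Delta$ means that each $e_i$ gives an endomorphism $(e_{i,X},e_{i,Z},e_{i,Y})$ of $\Delta$ in the sense of~\eqref{def:end-triangle}. Consequently $\Delta$ decomposes as a direct sum of candidate triangles $\Delta = \bigoplus_i \Delta_i$ with $\Delta_i\colon X_i \to Z_i \to Y_i \to X_i[1]$.

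The key technical point is that each $\Delta_i$ is itself distinguished. This is a standard consequence of pseudo-abelianness: a direct summand along an idempotent endomorphism of a distinguished triangle in a Karoubian triangulated category is distinguished, as in the Balmer--Schlichting theory of idempotent completions of triangulated categories. Granting this, May's Theorem~\ref{thm:may} applies to each $\Delta_i$ and yields
\[
\trace{\id{};Z_i} \;=\; \trace{\id{};X_i} + \trace{\id{};Y_i}
\]
in $\rend{D}{\one}$. By $k$-linearity of trace and its additivity on direct sums,
\[
\chi_X(g) \;=\; \sum_{i=0}^{n-1}\trace{\zeta^i\id{X_i};X_i} \;=\; \sum_{i=0}^{n-1} \zeta^i\,\trace{\id{};X_i},
\]
and likewise for $Z$ and $Y$. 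Weighting the May identities by $\zeta^i$ and summing over $i$ then produces $\chi_Z(g)=\chi_X(g)+\chi_Y(g)$, proving the theorem.

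I expect the main obstacle to be justifying the third paragraph, i.e. splitting a distinguished triangle along compatible idempotents in a Karoubian triangulated category; one either cites the idempotent-completion result directly or argues by constructing a mapping cone for $X_i \to Z_i$ and identifying it with $Y_i$ via the $(1-e_i)$-complement. The remainder of the argument is character theory for the finite cyclic group $\langle g\rangle$ over $k$, combined with the $k$-linear and direct-sum-additive properties of the trace recalled in the introduction.
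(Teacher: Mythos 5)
Your proof is correct and follows the same strategy as the paper's: decompose the equivariant triangle into isotypic summands using idempotents of the group algebra, note that each summand is distinguished because $D$ is Karoubian, apply May's Theorem~\ref{thm:may} to each piece, and reassemble via the character formula. The one genuine difference is that you exploit the pointwise nature of the statement to reduce to the cyclic subgroup $\langle g\rangle$, where every irreducible is one-dimensional and the projectors $e_i$ already lie in $k\langle g\rangle$, so the splitting of $X$, $Z$, $Y$ is immediate from pseudo-abelianness. The paper reduces only to a finite group and therefore must construct the isotypic pieces $S_V(X)=\underline{\rm Hom}_{kG}(V,X)$ for an arbitrary irreducible $V$ by a representability argument (realizing $S_V(X)$ as the image of a projector on a finite direct sum of copies of $X$); your route avoids that construction entirely, at the cost of not producing the general decomposition~\eqref{wr}, which the paper reuses for the corollary in \S 3. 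Your identity $\chi_X(g)=\sum_i\zeta^i\,\trace{\id{};X_i}$ is exactly the specialization of~\eqref{formula} to the cyclic case. Both arguments rest on the same key lemma --- that a candidate triangle which is a direct summand of a distinguished triangle in a Karoubian triangulated category is itself distinguished --- which the paper asserts without reference and you correctly attribute to Balmer--Schlichting.
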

\begin{proof}
We may assume that $G$ is finite. Let ${\rm Irr}kG$ be the set of isomorphism classes of irreducible $k-$representations of $G$. In $D$ we have a natural $G$-equivariant isomorphism
\begin{equation}\label{wr}
X\simeq \coprod_{V\in {\rm Irr}kG} V\otimes_k S_V(X)
\end{equation}
where $S_V(X)=\underline{{\rm Hom}}_{kG}(V, X)$ are certain objects and on which $G$ acts trivially. To see this, consider the contravariant functor $D\to (k-{\rm mod})$ given by
\[
{\rm Obj}(D)\ni Y\mapsto {\rm Hom}_{kG}\bigl(V, {\rm Hom}_{D}(Y, X)\bigr).
\]
This is representable. Indeed if in the above definition we replace $V$ by any finitely generated free $kG$-module $M$ and consider the corresponding functor, we see immediately that the functor is representable by an object $S_M(X)=$ a finite direct sum of $X$. The general case follows from this and the fact that $V$ is a finitely generated projective $kG$-module and hence the kernel (i.e. image) of a projector $\pi$ on a free $kG$-module $M$. Since $D$ is Karoubian, we can define $S_V(X)={\rm coker}(\pi^\ast)$ where $\pi^\ast\colon S_M(X)\to S_M(X)$ is induced by $\pi$. This is easily seen to represent $S_V(X)$. Once we have these objects, the decomposition of $X$ follows from the corresponding one for $kG$. It follows that the sequence \[S_V(X)\to S_V(Z)\to S_V(Y)\to S_V(X[1])\] being a direct summand of the original distinguished triangle is distinguished in $D$. Finally we note that by the above decomposition and $k$-linearity of trace we have
\begin{equation}\label{formula}
\trace{g, X}=\sum \chi_V(g)\trace{{\rm id};S_V(X)}
\end{equation}
where $\chi_V\colon G\to k$ is the usual character of $V$. Similarly for $Z$ and $Y$. The result follows from this and~\ref{thm:may}.
\end{proof}
\begin{proof}[{\sc Proof of~\ref{thm:add-torsion}}] Apply the result~\ref{thm:add-group} with $G=\Z/{n\Z}$ and the action $m\mapsto f_Z^{m}$ (resp. $m\mapsto f_X^{m}, m\mapsto f_Y^{m}$) on $Z$ (resp. $X, Y$).
\end{proof}
\section{Remark}
We conclude this short note by indicating a corollary of the proof of~\ref{thm:add-group}. We let $\fraka$ a Karoubian tensor category with $k\subseteq \rend{\fraka}{\one}$ where $k$ is an algebraic closure of ${\mathbbm Q}$. Define $\Z_\fraka$ to be the subring (=subgroup) of $\rend{\fraka}{\one}$ generated by all $\trace{\id{};X}$ with $X$ being dualizable in $\fraka$.
\begin{cor}
Let $f\colon X\to X$ be an endomorphism of a dualizable object in $\fraka$ with $f^n=\id{}$ for an integer $n> 0$. Then $\trace{f;X}\in \rend{\fraka}{\one}$ is integral over $\Z_\fraka$.
\end{cor}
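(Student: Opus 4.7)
The plan is to bootstrap from the \emph{proof} of Theorem~\ref{thm:add-group} rather than its statement. Crucially, the construction of $S_V(X)$, the decomposition~(\ref{wr}), and the identity~(\ref{formula}) use only the $k$-linear Karoubian tensor structure together with basic representation theory of $kG$-modules; no distinguished triangle is invoked until the very last sentence of that proof. So the same construction runs in $\fraka$. I would apply it to $G=\Z/n\Z$ acting on $X$ via $m\mapsto f^m$, producing, for each irreducible $V$ of $kG$, an object $S_V(X)$ of $\fraka$ realized as the image of an idempotent on a finite direct sum of copies of $X$.

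Next I would observe that each $S_V(X)$ is dualizable, since dualizability is stable under finite direct sums and retracts in any tensor category; consequently $\trace{\id{};S_V(X)}\in\Z_\fraka$ by definition. The identity~(\ref{formula}) applied to $g=f$ then yields
\[
\trace{f;X}=\sum_{V\in{\rm Irr}\, kG}\chi_V(f)\,\trace{\id{};S_V(X)}.
\]
Since $G$ is finite cyclic and $k$ is algebraically closed of characteristic zero, each $V$ is one-dimensional and $\chi_V(f)\in k$ is an $n$-th root of unity, hence an algebraic integer and in particular integral over $\Z\subseteq\Z_\fraka$.

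To finish I would invoke two standard facts: the ring $\rend{\fraka}{\one}$ is commutative (by the Eckmann--Hilton argument applied to the unit of a symmetric monoidal category), and the integral closure of any subring of a commutative ring is again a subring. Each summand above is a product of an element integral over $\Z_\fraka$ with an element of $\Z_\fraka$, so lies in the integral closure of $\Z_\fraka$; hence so does the sum, which is $\trace{f;X}$. The step that really demands care is the opening claim, namely verifying line by line that the proof of Theorem~\ref{thm:add-group} produces the isomorphism~(\ref{wr}) and the formula~(\ref{formula}) using only the hypotheses available on $\fraka$ here; once that is settled, the rest is formal.
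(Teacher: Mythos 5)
Your proposal is correct and takes essentially the same route as the paper, which likewise applies the $G=\Z/n\Z$ decomposition~\eqref{wr} and formula~\eqref{formula} inside $\fraka$ and concludes from the integrality of $\chi_V(g)$ over $\Z$. You merely make explicit two points the paper leaves implicit: that the retracts $S_V(X)$ are dualizable, so that $\trace{\id{};S_V(X)}\in\Z_\fraka$, and that the integral closure of $\Z_\fraka$ in the commutative ring $\rend{\fraka}{\one}$ is a subring.
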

\begin{proof}
Similar to the proof of~\ref{thm:add-torsion} consider $X$ with an action of $G=\Z/{n\Z}$. Note that in the category $\fraka$ the decomposition~\eqref{wr} and the formula~\eqref{formula} hold with exactly the same proof. Since the element $\chi_V(g)\in k$ is integral over $\Z$, the result follows from~\eqref{formula}.  
\end{proof}
\begin{bibdiv}
\begin{biblist}
\bib{MR0225841}{incollection}{
  author = {Eilenberg, S.}, author={Kelly, G. M.},
  title = {Closed categories},
  booktitle = {Proc. {C}onf. {C}ategorical {A}lgebra ({L}a {J}olla, {C}alif., 1965)},
  publisher = {Springer},
  year = {1966},
  pages = {421--562},
  address = {New York},
}
\bib{Deligne1990}{incollection}{
  author = {Deligne, P.},
  title = {Cat\'egories tannakiennes},
  year = {1990},
  volume = {87},
  pages = {111--195},
  address = {Boston, MA},
booktitle = {The {G}rothendieck {F}estschrift, {V}ol.\ {II}},
}
\bib{Mac1963}{article}{
  author = {Mac Lane, S.},
  title = {Natural associativity and commutativity},
  journal = {Rice Univ. Studies},
  year = {1963},
  volume = {49},
  pages = {28--46},
  number = {4},
}
\bib{May2001}{article}{
  author = {May, J. P.},
  title = {The additivity of traces in triangulated categories},
  journal = {Adv. Math.},
  year = {2001},
  volume = {163},
  pages = {34--73},
  number = {1},
}
\bib{Saavedra1972}{book}{
  title = {Cat\'egories {T}annakiennes},
  publisher = {Springer-Verlag},
  year = {1972},
  author = {Saavedra Rivano, N.},
  pages = {ii+418},
  series = {Lecture Notes in Mathematics, Vol. 265},
  address = {Berlin},
}
\end{biblist}
\end{bibdiv}
\end{document}